\newtheorem{theorem}{Theorem}
\newtheorem{conjecture}[theorem]{Conjecture}
\newtheorem{lemma}[theorem]{Lemma}
\newtheorem{Remark}[theorem]{Remark}
\begin{document}
\title{A partial proof of the Brouwer's conjecture}

\markright{Abbreviated Article Title}

\author [a]{Slobodan Filipovski \thanks{Corresponding author; {slobodan.filipovski@famnit.upr.si}}}

\affil[a] {\small \it FAMNIT, University of Primorska, Koper, Slovenia}

 \date{}
\maketitle

\begin{abstract} Let $G$ be a simple graph with $n$ vertices and $m$ edges and let $k$ be a natural number such that $k\leq n.$ Brouwer conjectured that the sum of the $k$ largest Laplacian eigenvalues of $G$ is at most $m+{k+1 \choose 2}.$ In this paper we prove that this conjecture is true for simple $(m,n)$-graphs where $n\leq m\leq \frac{\sqrt{3}-1}{4}(n-1)n$ and $k\in  \left[ \sqrt[3]{\frac{8m^{2}}{n-1}+4mn+n^{2}}, n\right].$ Moreover, we prove that the conjecture is true for all simple $(m,n)$-graphs where $k (\leq n)$ is a natural number from the interval $\left[\sqrt{2n-2m+2\sqrt{2m^{2}+mn(n-1)}},1+\frac{8m^{2}}{n^{2}(n-1)}+\frac{4m}{n}\right].$

\end{abstract}

\maketitle






\section{Introduction} \quad Let $G$ be a simple graph with vertex set $V(G)$ and edge set $E(G)$ and let $|V(G)|=n$ and $|E(G)|=m.$
The degree of a vertex $v\in V(G)$ denoted by $d(v)$, is the number of neighbours of $v$. 
The Laplacian matrix of $G$ is the $n \times n$ matrix $L(G)=[l_{ij}]$ defined as $L(G)=D(G)-A(G)$, where $D(G)$ is the diagonal matrix of vertex degrees of the graph $G$, and $A(G)$
is the adjacency matrix of $G$.
It is well known that $L(G)$ is a positive semidefinite matrix, consequently its eigenvalues are nonnegative real numbers. The eigenvalues of $L(G)$ are called the Laplacian
eigenvalues of $G$ and are denoted by $\mu_{1}(G)\geq \mu_{2}(G)\geq \ldots \geq \mu_{n}(G)$. Since each row sum of $L(G)$ is $0$, it holds $\mu_{n}(G)=0.$
In this paper, we study the sum $S_{k}(G)=\sum_{i=1}^{k} \mu_{i}(G)$ for $1\leq k \leq n$. Brouwer \cite{bro}  has conjectured the following upper bound for the sum $S_{k}(G).$

\begin{conjecture}\cite{bro} Let $G$ be a graph with vertices $n$ and edges $m$. Then
$$S_{k}(G)\leq m+  { k+1 \choose 2}$$
for $k=1, 2, \ldots , n.$
\end{conjecture}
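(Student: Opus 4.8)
The plan is to bound $S_k(G)$ from above using only the first two spectral moments of $L(G)$ together with a constraint on the degree sequence, and then to compare the resulting estimate with $m+\binom{k+1}{2}$. Two identities are available at no cost: $\sum_{i=1}^n\mu_i=\operatorname{tr}L(G)=2m$, and $\sum_{i=1}^n\mu_i^2=\operatorname{tr}L(G)^2=2m+\sum_{v}d(v)^2$, since $L(G)^2$ has diagonal entries $d(v)^2+d(v)$ and exactly $2m$ off-diagonal entries equal to $-1$. The two boundary cases are immediate and should be dispatched first: for $k=n$ one has $S_n=2m\le m+\binom{n+1}{2}$ because $m\le\binom{n}{2}$, and $k=1$ is the classical bound $\mu_1\le m+1$. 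The real content therefore lies in $2\le k\le n-1$.

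The main engine is an extremal estimate. Fixing the value of $S_k$ and the total $\sum_i\mu_i=2m$, among all nonnegative nonincreasing sequences with these two constraints the quantity $\sum_i\mu_i^2$ is minimised by the two-block configuration in which the top $k$ entries equal $S_k/k$ and the remaining $n-k$ entries equal $(2m-S_k)/(n-k)$; this is a standard convexity/majorization fact. It yields
\[
\frac{S_k^2}{k}+\frac{(2m-S_k)^2}{n-k}\le \sum_{i=1}^n\mu_i^2 ,
\]
and solving the resulting quadratic in $S_k$ gives the explicit upper bound
\[
S_k\le \frac{2mk+\sqrt{k(n-k)\bigl(n\sum_{i=1}^n\mu_i^2-4m^2\bigr)}}{n}.
\]
To make this effective I would invoke de Caen's inequality $\sum_v d(v)^2\le\frac{2m^2}{n-1}+m(n-2)$, which gives $\sum_i\mu_i^2\le M:=\frac{2m^2}{n-1}+mn$. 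This $M$ is precisely the quantity lurking inside the abstract's thresholds, which read $\sqrt[3]{4M+n^2}$, $\sqrt{2n-2m+2\sqrt{(n-1)M}}$, and $1+\tfrac{4M}{n^2}$.

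With these pieces in hand Brouwer's inequality reduces to a purely algebraic comparison in $k,m,n$, namely $\frac{2mk+\sqrt{k(n-k)(nM-4m^2)}}{n}\le m+\binom{k+1}{2}$, while in parallel the cruder Cauchy--Schwarz bound $S_k\le\sqrt{k\sum_{i=1}^k\mu_i^2}\le\sqrt{kM}$ supplies a second such comparison. Isolating the radical and squaring converts each into a polynomial inequality in $k$ with coefficients depending on $m$ and $n$; the set of $k$ satisfying it is an interval, and identifying its endpoints is exactly what produces the cube-root threshold $\sqrt[3]{4M+n^2}$ for the extremal bound and the pair $\sqrt{2n-2m+2\sqrt{(n-1)M}}$, $1+\tfrac{4M}{n^2}$ for the Cauchy--Schwarz bound. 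I would confirm these intervals by checking the sign of the governing polynomial at the endpoints and its monotonicity in between, which is routine once the endpoints are pinned down.

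The hard part --- and the reason one should not expect a full proof of the conjecture by this route --- is the intermediate regime of $k$ lying outside both intervals. There the two-block minimiser overshoots: the second-moment bound on $S_k$ strictly exceeds $m+\binom{k+1}{2}$, so Cauchy--Schwarz and de Caen information alone cannot close the gap. Removing this obstruction would require genuinely finer input, for instance the Grone--Merris--Bai majorization $S_k\le\sum_{i=1}^k d_i^\ast$ by the conjugate degree sequence (which, notably, does not by itself imply Brouwer for all $k$), the third moment $\sum_i\mu_i^3=\operatorname{tr}L(G)^3$, or structural restrictions on $G$. I expect the honest outcome of this plan to be exactly a proof valid on the two $k$-ranges named in the abstract, with the middle range remaining the essential difficulty.
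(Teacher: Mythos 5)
You were asked to prove Brouwer's conjecture itself, which is open: the paper states it with a citation and never proves it, establishing it only on restricted ranges of $k$ under hypotheses on $m$ and $n$. So there is no ``paper's own proof'' to match, and your explicit refusal to claim a full proof --- together with your prediction that second-moment information alone cannot close the middle range of $k$ --- is precisely the position the paper itself takes.

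As a plan for the partial results, your route is genuinely different from the paper's and is sound. The paper symmetrizes via $L'=J-L$, applies Weyl's inequality to get $\mu_i+\mu'_{n-i+2}\le 0$ for $i\ge 2$ and $\mu_1+\mu'_1\ge n$, keeps $t=\mu_1/n$ as a free parameter, and after de Caen arrives at
\[
S_k(G)\le nt+\sqrt{(k-1)\left(\frac{2m^2}{n-1}+mn+n^2t(1-t)\right)}.
\]
Its Theorem 5 then crudely sets $t\le 1$ and $t(1-t)\le\frac14$ and uses the hypothesis $n\le m$ to absorb the additive $n$, while its final theorem shows the right-hand side is increasing in $t$ exactly when $k\le 1+\frac{8m^2}{n^2(n-1)}+\frac{4m}{n}$, so that one may evaluate at $t=1$. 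Note therefore one misattribution in your sketch: that upper endpoint $1+\frac{4M}{n^2}$ is a monotonicity condition, not an endpoint of the final polynomial comparison in $k$. Your toolkit --- the exact identities $\operatorname{tr}L=2m$ and $\operatorname{tr}L^2=M_1+2m$, the two-block estimate (your quadratic is solved correctly, and it is the Zhou-type bound quoted in the paper's introduction), and plain Cauchy--Schwarz --- bypasses the $J-L$ detour entirely and is pointwise at least as sharp: the paper's lemma only yields $\sum_{i\ge 2}\mu_i^2\le M_1+2m+n^2t(1-t)$, whereas the trace identity gives $\sum_{i\ge 2}\mu_i^2=M_1+2m-\mu_1^2$ exactly, and maximizing $nt+\sqrt{(k-1)(M-n^2t^2)}$ over $t$ returns your $\sqrt{kM}$. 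Carried out, your plan would thus give intervals at least as good as the abstract's; indeed $\sqrt{kM}\le m+\frac{k^2}{2}$ already holds for $k\ge\sqrt[3]{4M}$, slightly better than the paper's $\sqrt[3]{4M+n^2}$, though you should surface the side conditions (the paper needs $m\le\frac{\sqrt3-1}{4}n(n-1)$ for its interval to be nonempty) when pinning down endpoints. Two small slips that do not affect the plan: the ``$2m$ off-diagonal entries equal to $-1$'' live in $L$, not $L^2$ (your value of $\operatorname{tr}L^2$ is nevertheless correct), and for $k=1$ your bound $\mu_1\le m+1$ is actually more careful than the paper's appeal to $\mu_1\le n$, which suffices only when $m\ge n-1$.
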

Based on a computer check, Brouwer \cite{bro} showed that the Conjecture 1 is true for all graphs with at most $10$ vertices. There have been many partial proofs of this conjecture using specific methods from spectral graph theory. For $k=1$, the conjecture follows from the well-known inequality 
$\mu_{1}(G)\leq |V(G)|=n,$ see in \cite{god}. The conjecture has been proved to be true for any graphs with $k=2, n-2, n-3$ , see \cite{chen, hae},  trees \cite{hae}, threshold graphs \cite{hel}, unicyclic graphs \cite{du}, bicyclic graphs \cite{du, wang}, tricyclic graphs having no pendant vertices \cite{kum, wang}, regular graphs \cite{ber, may}, split graphs \cite{may}, cographs \cite{ber, may}, planar graphs when $k\geq 11$ and bipartite graphs when $k\geq \sqrt{32n}$ \cite{coo}. Also the cases $k=n$ and $k=n-1$ are straightforward.
Another related upper bound was given by Zhou in \cite{bo}: 
$$S_{k}(G)\leq \frac{2mk+\sqrt{mk(n-k-1)(n^{2}-n-2m)}}{n-1}$$
where $1\leq k \leq n-2$ and $m=|E(G)|.$

In this paper we prove that the Brouwer's conjecture is true for all simple $(m,n)$-graphs where $n\leq m\leq \frac{\sqrt{3}-1}{4}(n-1)n$ and $k\in  \left[ \sqrt[3]{\frac{8m^{2}}{n-1}+4mn+n^{2}}, n\right].$ Moreover, we prove that this conjecture is true for all simple $(m,n)$-graphs where $k\leq n$ is a natural number from the interval $\left[\sqrt{2n-2m+2\sqrt{2m^{2}+mn(n-1)}},1+\frac{8m^{2}}{n^{2}(n-1)}+\frac{4m}{n}\right].$

\section{A partial proof of the conjecture}

\quad Let $J$ be all-ones matrix of size $n$ and let $L^{'}=J-L.$ Since $(L^{'})^{\texttt{T}}=J^{\texttt{T}}-L^{\texttt{T}}=J-L=L^{'},$ we obtain that $L^{'}$ is a symmetric matrix.
Let $\mu_{1}\geq \mu_{2}\geq \ldots \geq \mu_{n}$ be the eigenvalues of $L$, and let $\mu_{1}^{'}\geq \mu_{2}^{'}\geq \ldots \geq \mu_{n}^{'}$ be the eigenvalues of $L^{'},$ in the decreasing order. Moreover, let $\lambda_{1}\geq \lambda_{2}\geq \ldots \geq \lambda_{n}$ be the eigenvalues of $J$. It is well-known that $\lambda_{1}(J)=n$ and $\lambda_{i}(J)=0$, for $i=2, 3, \ldots , n.$
Applying the Weyl's inequality \cite{weyl} we get the following result. 
\begin{lemma}
\begin{itemize}
\item [a)] $\mu_{i}+\mu^{'}_{n-i+2}\leq0 \;\;\text{for}\;\; i=2, 3, \ldots, n.$
\item [b)] $\mu_{1}+\mu_{1}^{'}\geq n.$
\end{itemize}
\end{lemma}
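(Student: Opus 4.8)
The plan is to read off both inequalities directly from Weyl's inequalities applied to the decomposition $J=L+L'$. Recall that for two $n\times n$ symmetric matrices $A$ and $B$ with eigenvalues listed in decreasing order, Weyl's inequalities give
$$\lambda_{i+j-1}(A+B)\leq \lambda_i(A)+\lambda_j(B)\qquad (i+j-1\leq n)$$
and
$$\lambda_{i+j-n}(A+B)\geq \lambda_i(A)+\lambda_j(B)\qquad (i+j-n\geq 1).$$
I would take $A=L$ and $B=L'$, so that $A+B=J$, whose eigenvalues are already recorded as $\lambda_1(J)=n$ and $\lambda_2(J)=\cdots=\lambda_n(J)=0$. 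Thus every statement about $\mu_i+\mu'_j$ becomes a statement about an eigenvalue of $J$, which is either $n$ or $0$.

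For part a) I would use the lower Weyl inequality. Choosing $j=n-i+2$ makes the left-hand index equal to $i+j-n=2$, so the inequality reads $\lambda_2(J)\geq \mu_i+\mu'_{n-i+2}$. Since $\lambda_2(J)=0$, this is exactly $\mu_i+\mu'_{n-i+2}\leq 0$. I would then check that the admissibility conditions $i+j-n\geq 1$ and $1\leq j\leq n$ translate into $2\leq i\leq n$, which is precisely the stated range of $i$.

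For part b) I would use the upper Weyl inequality with $i=j=1$. Then $i+j-1=1$, so $\lambda_1(J)\leq \mu_1+\mu'_1$, and since $\lambda_1(J)=n$ this gives $\mu_1+\mu'_1\geq n$.

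There is no serious obstacle here; the whole content is the choice of matrices and the index bookkeeping. The only points requiring care are selecting the correct form of Weyl's inequality in each case (the lower bound for a) and the upper bound for b)), and verifying that the chosen pairs $(i,j)$ are admissible and land on the intended eigenvalue of $J$ — index $2$ (value $0$) for a) and index $1$ (value $n$) for b).
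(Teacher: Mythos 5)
Your proof is correct. Part a) is exactly the paper's argument: apply the Weyl inequality $\mu_i(L)+\mu'_j(L')\leq\lambda_{i+j-n}(J)$ with $j=n-i+2$, land on $\lambda_2(J)=0$, and check that the index constraints force $i\geq 2$. For part b), however, you take a genuinely different route. The paper does not invoke the complementary Weyl inequality $\lambda_1(A+B)\leq\lambda_1(A)+\lambda_1(B)$; instead it uses the trace identity $\mathrm{trace}(L)+\mathrm{trace}(L')=\mathrm{trace}(J)=n$ to write $n=\mu_1+\mu'_1+\sum_{i=2}^{n}\bigl(\mu_i+\mu'_{n-i+2}\bigr)$ and then deduces $\mu_1+\mu'_1\geq n$ because part a) makes every summand nonpositive. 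Your version is more direct and is logically independent of part a), needing only the subadditivity of the largest eigenvalue; the paper's version needs part a) as input but requires nothing beyond the trace. Both are valid and yield the identical inequality, so the difference is purely one of economy: you use two forms of Weyl's theorem, the paper uses one form plus a trace computation.
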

\begin{proof}
\begin{itemize}
\item[a)] From Weyl's inequality we get:
\begin{equation}\label{prva} \mu_{i}(L)+\mu^{'}_{j}(L^{'})\leq \lambda_{i+j-n}(L+L^{'})=\lambda_{i+j-n}(J).
\end{equation}
Setting $j=n-i+2$ in (\ref{prva}) we get the inequality:
\begin{equation}\label{druga}
\mu_{i}+\mu^{'}_{n-i+2}\leq \lambda_{2}(J)=0, \;\;\text{for}\;\; i=2, 3, \ldots, n.
\end{equation}
\item[b)] Since $\text{trace}(L)=\sum_{i=1}^{n} \mu_{i},$ $\text{trace}(L^{'})=\sum_{i=1}^{n} \mu_{i}^{'}$ and 
$\text{trace}(L)+\text{trace}(L^{'})=\text{trace}(J)=n$ we have 
$$\mu_{1}+\mu_{1}^{'}+\sum_{i=2}^{n}(\mu_{i}+\mu_{n-i+2}^{'})=n.$$
From a) we get $\mu_{1}+\mu_{1}^{'}\geq n.$
\end{itemize}
\end{proof}
The \emph{first Zagreb index} for a graph with a vertex set $\{v_{1}, v_{2}, \ldots, v_{n}\}$ and vertex degrees $d_i=\text{deg}(v_{i})$ for $i=1, \ldots, n$ is defined by
$$M_{1}(G)=\sum_{i=1}^{n} d_{i}^{2}.$$ The following upper bound for $M_{1}$, obtained by de Caen in \cite{caen}, will be used in our proof.
\begin{lemma} \cite{caen} Let $G=(V, E)$ be a graph with $n$ vertices and $m$ edges. Then
$$M_{1}(G)=\sum_{i=1}^{n} d_{i}^{2} \leq \frac{2m^{2}}{n-1}+mn-2m.$$
Moreover, if $G$ is connected, then equality holds if and only if $G$ is either a star $K_{1, n-1}$ or a complete graph $K_{n}.$
\end{lemma}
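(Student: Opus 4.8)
The plan is to reduce the inequality to a clean lower bound and only then feed in graphicality. First I would record the double-counting identity
$$\sum_{i=1}^{n} d_i^2=\sum_{ij\in E}(d_i+d_j),$$
which holds because each vertex $i$ is an endpoint of exactly $d_i$ edges and contributes $d_i$ to each of them. Writing $\bar d_i=n-1-d_i$ for the complementary degree and using $\sum_i d_i=2m$, this rearranges to $\sum_i d_i^2=2m(n-1)-\sum_i d_i\bar d_i$. Substituting into the claimed bound and using $n(n-1)-2m=2\bar m$ with $\bar m=\binom{n}{2}-m$, I find that de Caen's inequality is \emph{equivalent} to the lower bound $\sum_{i=1}^{n} d_i(n-1-d_i)\ge \frac{2m\bar m}{n-1}$. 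Since each vertex lies in $\bar d_i$ non-adjacent pairs, $\sum_i d_i\bar d_i=\sum_{\{i,j\}\notin E}(d_i+d_j)$, so the statement says precisely that the average of $d_i+d_j$ over non-adjacent pairs is at least $2m/(n-1)$. As a sanity check I would verify that both $K_n$ and $K_{1,n-1}$ turn this into an equality, matching the stated equality cases.

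With the problem in this form, the first instinct is Cauchy--Schwarz applied to the complementary sequences $d_i$ and $\bar d_i$, but the elementary estimates point the wrong way: since $d_i+\bar d_i=n-1$ is constant and the two sequences are oppositely ordered, Chebyshev's sum inequality only yields the \emph{upper} bound $\sum_i d_i\bar d_i\le \frac{(\sum d_i)(\sum\bar d_i)}{n}$, while I need a lower bound with denominator $2(n-1)$. The remedy is to exploit that the $d_i$ form the degree sequence of an actual graph. Concretely, I would run a compression/shifting argument: a local degree-shifting operation preserving $n$ and $m$ can only increase $\sum_i d_i^2$, so the maximum is attained on a threshold graph, whose degree sequence is a staircase. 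On this restricted family the bound can be checked directly, with the convex quadratic $\frac{2m^2}{n-1}+m(n-2)$ acting as an interpolating majorant pinned at the two extremal graphs $K_{1,n-1}$ (at $m=n-1$) and $K_n$ (at $m=\binom{n}{2}$).

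The main obstacle is exactly this need for graphicality. The inequality is simply false for arbitrary sequences in $[0,n-1]$ of fixed sum: the sequence $(n-1,n-1,0,\dots,0)$ has $\sum_i d_i^2=2(n-1)^2$, which strictly exceeds the claimed bound $n(n-1)$ for $n>2$, yet it is not a degree sequence, since two full-degree vertices force every other degree to be positive. Hence no argument using only $\sum_i d_i=2m$ and $0\le d_i\le n-1$ can work, and the structural fact that a graph cannot simultaneously carry many near-full and many near-empty degrees must be converted into the exact constant $2/(n-1)$. I expect this conversion to be the delicate heart of the proof -- whether through the compression argument above, or through the per-edge identity $d_i+d_j=|N(i)\cup N(j)|+|N(i)\cap N(j)|$ together with careful control of the resulting co-degree (triangle) sum -- with the two equality graphs guiding how tight each intermediate estimate must be kept.
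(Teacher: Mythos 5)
First, a point of reference: the paper itself offers no proof of this lemma --- it is quoted directly from de Caen \cite{caen} --- so there is no in-paper argument to measure yours against. Judged on its own terms, what you have written is a correct and illuminating \emph{reduction}, but not a proof. The identity $\sum_i d_i^2=\sum_{ij\in E}(d_i+d_j)$, the equivalence of de Caen's bound with the lower bound $\sum_i d_i(n-1-d_i)\ge \frac{2m\bar m}{n-1}$, the observation that Chebyshev's inequality points the wrong way, and the non-graphical counterexample $(n-1,n-1,0,\dots,0)$ showing that no argument using only $\sum_i d_i=2m$ and $0\le d_i\le n-1$ can succeed --- all of these are accurate and correctly identify where the difficulty lives.

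The gap is that both routes you offer for the one substantive step are left as assertions. The shifting argument does legitimately reduce the problem to threshold graphs (the move that deletes $vw$ and adds $uw$ with $d_u\ge d_v$ changes $\sum_i d_i^2$ by $2(d_u-d_v)+2>0$), but ``on this restricted family the bound can be checked directly'' is exactly where the entire content of the lemma sits: one must parametrize the extremal staircase degree sequences (essentially the quasi-star and quasi-complete graphs with $m=\binom{k}{2}+j$), compute $\sum_i d_i^2$ there, and verify a two-parameter polynomial inequality against $\frac{2m^2}{n-1}+m(n-2)$, while also confirming that equality survives only at $K_{1,n-1}$ and $K_n$ among connected graphs. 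None of this is carried out, and it is not a formality. Your alternative route is in worse shape: combining $d_i+d_j=|N(i)\cup N(j)|+|N(i)\cap N(j)|$ with the crude bound $|N(i)\cup N(j)|\le n$ gives $\sum_i d_i^2\le mn+3t$ ($t$ the number of triangles), and closing the argument would then require $3t\le \frac{2m^2}{n-1}-2m$; this is \emph{false} in general (a triangle plus two isolated vertices has $n=5$, $m=3$, $t=1$, while $\frac{2m^2}{n-1}-2m=-\frac{3}{2}$), so that decomposition is too lossy as stated and would need a genuinely sharper handling of $|N(i)\cup N(j)|$. In short: the framing is sound, but the proof of the inequality itself is still missing.
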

In the next theorem we give an upper bound for $S_{k}(G)$ in terms of $m, n$ and $k.$

\begin{theorem} Let $G$ be a simple graph on $n$ vertices and $m$ edges. Then
$$S_{k}(G)=\sum_{i=1}^{k} \mu_{k} \leq n +\sqrt{(k-1)\left(\frac{2m^{2}}{n-1}+mn+\frac{n^{2}}{4}\right)}.$$
\end{theorem}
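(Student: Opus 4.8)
The plan is to isolate the largest Laplacian eigenvalue, bound it by the elementary estimate $\mu_1\le n$ recalled in the introduction, and treat the remaining $k-1$ eigenvalues by the Cauchy--Schwarz inequality together with a trace identity and de Caen's bound (Lemma 2). First I would write
$$S_k(G)=\mu_1+\sum_{i=2}^{k}\mu_i ,$$
and apply Cauchy--Schwarz to the $k-1$ terms $\mu_2,\dots,\mu_k$:
$$\sum_{i=2}^{k}\mu_i\le\sqrt{(k-1)\sum_{i=2}^{k}\mu_i^{2}}\le\sqrt{(k-1)\sum_{i=2}^{n}\mu_i^{2}},$$
where the last step uses that every Laplacian eigenvalue is nonnegative, so enlarging the range of summation only increases the sum of squares.

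The next ingredient is the value of $\sum_{i=1}^{n}\mu_i^{2}=\text{trace}(L^{2})$. Since $L=D-A$ has zero diagonal in $A$, computing the diagonal entries of $L^{2}$ gives $(L^{2})_{ii}=d_i^{2}+d_i$, whence $\text{trace}(L^{2})=\sum_i(d_i^{2}+d_i)=M_1(G)+2m$. Subtracting the top term and inserting de Caen's bound $M_1(G)\le\frac{2m^{2}}{n-1}+mn-2m$ from Lemma 2 (the $-2m$ cancels the $+2m$), I obtain
$$\sum_{i=2}^{n}\mu_i^{2}=M_1(G)+2m-\mu_1^{2}\le\frac{2m^{2}}{n-1}+mn-\mu_1^{2},$$
so that, combining with the Cauchy--Schwarz step,
$$S_k(G)\le\mu_1+\sqrt{(k-1)\left(\frac{2m^{2}}{n-1}+mn-\mu_1^{2}\right)}.$$

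It then remains to eliminate the unknown quantity $\mu_1$, using only $0\le\mu_1\le n$. For the linear term outside the root I would use $\mu_1\le n$. For the term under the root I would complete the square: since $\left(\mu_1-\tfrac n2\right)^{2}\ge0$ we have $-\mu_1^{2}\le\frac{n^{2}}{4}-n\mu_1\le\frac{n^{2}}{4}$, and therefore the radicand is at most $\frac{2m^{2}}{n-1}+mn+\frac{n^{2}}{4}$. Substituting both estimates yields exactly
$$S_k(G)\le n+\sqrt{(k-1)\left(\frac{2m^{2}}{n-1}+mn+\frac{n^{2}}{4}\right)},$$
which is the claimed bound; this completing-the-square step is precisely where the extra constant $\frac{n^{2}}{4}$ enters.

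I expect no genuinely deep obstacle here, since all the machinery (Cauchy--Schwarz, the identity $\text{trace}(L^{2})=M_1(G)+2m$, Lemma 2, and $\mu_1\le n$) is either standard or already available. The one point requiring care is the simultaneous treatment of $\mu_1$: it appears linearly outside the root, where a large value helps the bound, and quadratically inside, where a large value hurts it, so one cannot naively optimize. The resolution is to decouple the two occurrences, bounding the outer term by $n$ and the inner radicand uniformly on $[0,n]$ by completing the square; verifying that this decoupling is legitimate (in particular that the radicand stays nonnegative, which follows from $\sum_{i=2}^{n}\mu_i^{2}\ge0$) is the main thing to check.
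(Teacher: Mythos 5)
Your proof is correct, and it takes a genuinely different route from the paper's. The paper never uses the identity $\operatorname{trace}(L^{2})=M_{1}(G)+2m$ directly; instead it introduces the auxiliary matrix $L^{'}=J-L$, invokes Weyl's inequality to get $\mu_{i}+\mu^{'}_{n-i+2}\leq 0$ for $i\geq 2$ and $\mu_{1}+\mu_{1}^{'}\geq n$, computes the combined quantity $\sum_{i}\mu_{i}^{2}+\sum_{i}(\mu_{i}^{'})^{2}=2M_{1}+4m+n^{2}$, and, writing $t=\mu_{1}/n$, deduces $\sum_{i=2}^{n}\mu_{i}^{2}\leq M_{1}+2m+n^{2}t(1-t)$ before applying the same Cauchy--Schwarz and de Caen steps you use and finishing with $t\leq 1$, $t(1-t)\leq\tfrac14$. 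Your argument is more elementary (no Weyl, no auxiliary matrix) and your intermediate inequality is strictly sharper: you have the exact identity $\sum_{i=2}^{n}\mu_{i}^{2}=M_{1}+2m-\mu_{1}^{2}$, which undercuts the paper's bound by the nonnegative amount $n\mu_{1}$. In fact, since $-\mu_{1}^{2}\leq 0$, your method immediately yields the stronger conclusion $S_{k}(G)\leq n+\sqrt{(k-1)\left(\frac{2m^{2}}{n-1}+mn\right)}$ unconditionally, whereas the paper only reaches that bound in its Theorem 6 via a monotonicity argument on $g(t)$ that requires an extra hypothesis on $k$; your completing-the-square step (which is anyway superfluous, as $-\mu_{1}^{2}\leq 0\leq\frac{n^{2}}{4}$) merely weakens this back to the stated form. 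Your decoupling of the two occurrences of $\mu_{1}$ is legitimate for exactly the reason you give, and the nonnegativity of the radicand follows from $\sum_{i=2}^{n}\mu_{i}^{2}\geq 0$ as you note.
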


\begin{proof} From $\text{trace}(L\cdot L^{\texttt{T}})=\sum_{i=1}^{n}\sum_{j=1}^{n}l_{ij}^{2}$ and $L=L^{\texttt{T}}$ we obtain
\begin{equation}\label{eigenvalues1}\sum_{i=1}^{n}\mu_{i}^{2}=\text{trace}(L^{2})=\text{trace}(L\cdot L^{\texttt{T}})=\sum_{i=1}^{n}\sum_{j=1}^{n}l_{ij}^{2}.
\end{equation}
Analogously we get
\begin{equation}\label{eigenvalues2}\sum_{i=1}^{n}(\mu_{i}^{'})^{2}=\text{trace}((L^{'})^{2})=\text{trace}(L^{'}\cdot (L^{'})^{\texttt{T}})=\sum_{i=1}^{n}\sum_{j=1}^{n}(l_{ij}^{'})^{2}.
\end{equation}
Combining (\ref{eigenvalues1}) and (\ref{eigenvalues2}) we get:
$$\sum_{i=1}^{n}\mu_{i}^{2}+\sum_{i=1}^{n}(\mu_{i}^{'})^{2}=\sum_{i=1}^{n}\sum_{j=1}^{n}l_{ij}^{2}+\sum_{i=1}^{n}\sum_{j=1}^{n}(l_{ij}^{'})^{2}=\sum_{i=1}^{n}\sum _{j=1}^{n} (l_{ij}^{2}+(1-l_{ij})^{2})$$
$$=\sum_{i=1}^{n}\sum_{j=1}^{n}(2 l_{ij}^{2}-2 l_{ij}+1)=\left(\sum_{i=1}^{n} 2d_{i}^{2}-\sum_{i=1}^{n}2d_{i}+n\right)+\left(\sum_{i=1}^{n}\sum_{j=1, j\neq i}^{n}(2 l_{ij}^{2}-2l_{ij}+1)\right)$$
$$=(2M_{1}-4m+n)+((4+4+\ldots+4)+n^{2}-n)$$
$$=2M_{1}-4m+n+2m\cdot 4 +n^{2}-n=2M_{1}+4m+n^{2}.$$
Let $t=\frac{\mu_{1}}{n}.$ Since $\mu_{1}\leq n$ we get $0<t\leq 1.$ The inequality $\mu_{1}+\mu_{1}^{'}\geq n$ implies
\begin{equation}\label{largest}
\mu_{1}^{2}+(\mu_{1}^{'})^{2}\geq n^{2}t^{2}+n^{2}(1-t)^{2}=n^{2}(2t^{2}-2t+1).
\end{equation}
Moreover, from $\mu_{i}+\mu^{'}_{n-i+2}\leq 0, \;\;\text{for}\;\; i=2, 3, \ldots, n,$ and from $\mu_{i}\geq 0$ we easily conclude that $\mu_{i}^{2} \leq (\mu_{n-i+2}^{'})^{2}.$
Therefore we have
$$2M_{1}+4m+n^{2}=\sum_{i=1}^{n}\mu_{i}^{2}+\sum_{i=1}^{n} (\mu_{i}^{'})^{2}=(\mu_{1}^{2}+(\mu_{1}^{'})^{2})+\sum_{i=2}^{n}(\mu_{i}^{2}+(\mu_{n-i+2}^{'})^{2})$$
$$\geq (1-2t(1-t))n^{2}+2\sum_{i=2}^{n}\mu_{i}^{2}.$$
From the last inequality we get
\begin{equation}\label{glav}
\sum_{i=2}^{k}\mu_{i}^{2}\leq \sum_{i=2}^{n} \mu_{i}^{2} \leq \frac{2M_{1}+4m+n^{2}}{2}-\frac{(1-2t(1-t))n^{2}}{2}.
\end{equation}
Now, based on the inequality (\ref{glav}) and applying the inequality between the arithmetic and quadratic means for the nonnegative eigenvalues $\mu_{2}, \mu_{3}, \ldots, \mu_{n}$ we get
\begin{equation}\label{arit}
\sum_{i=2}^{k} \mu_{i} \leq \sqrt{(k-1)(\mu_{2}^{2}+\ldots+\mu_{k}^{2})}\leq \sqrt{(k-1) \left(\frac{2M_{1}+4m+n^{2}}{2}-\frac{(1-2t(1-t))n^{2}}{2}\right)}.
\end{equation}
Hence we get
\begin{equation} \label{conclusion}
S_{k}(G)=\sum_{i=1}^{k} \mu_{i}=\mu_{1}+\sum_{i=2}^{n} \mu_{i}\leq nt+ \sqrt{(k-1) \left(M_{1}+2m+n^{2}t(1-t)\right)}.
\end{equation}
Setting $M_{1}\leq \frac{2m^{2}}{n-1}-2m+mn$ in (\ref{conclusion}) we obtain the following inequality
\begin{equation}\label{izvod}
\sum_{i=1}^{k}\mu_{i} \leq nt+\sqrt{(k-1)\left(\frac{2m^{2}}{n-1}+mn+n^{2}t(1-t)\right)}.
\end{equation}
In the end, from $t\leq 1$ and $t(1-t)\leq \frac{1}{4}$ we have
$$S_{k}(G)=\sum_{i=1}^{k}\mu_{i} \leq n+\sqrt{(k-1)\left(\frac{2m^{2}}{n-1}+mn+\frac{n^{2}}{4}\right)}.$$

\end{proof}

\begin{theorem} Let $G$ be a simple graph on $n$ vertices and $m$ edges such that\\ $n\leq m\leq \frac{\sqrt{3}-1}{4}(n-1)n.$ 
The Brouwer's conjecture holds for all natural numbers  $k$ from the interval $ \left[\sqrt[3]{\frac{8m^{2}}{n-1}+4mn+n^{2}}, n\right].$ 
\end{theorem}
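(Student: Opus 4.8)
The plan is to read off the upper bound from Theorem 3 and check directly that it is dominated by $m+\binom{k+1}{2}$ on the stated range of $k$, using the two hypotheses on $m$ to handle the slack term and the endpoint condition respectively. Write $C=\frac{2m^{2}}{n-1}+mn+\frac{n^{2}}{4}$, so that Theorem 3 reads $S_{k}(G)\leq n+\sqrt{(k-1)C}$, while Brouwer's conjecture asks for $S_{k}(G)\leq m+\binom{k+1}{2}=m+\frac{k(k+1)}{2}$. Since $m\geq n$ by hypothesis, the difference $m-n\geq 0$ is available as free slack, so it suffices to establish the cleaner inequality $\sqrt{(k-1)C}\leq \frac{k(k+1)}{2}$; from it one immediately gets $S_{k}(G)\leq n+\frac{k(k+1)}{2}\leq m+\frac{k(k+1)}{2}$, as required.

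The crucial observation is that $4C=\frac{8m^{2}}{n-1}+4mn+n^{2}$, so the left endpoint of the interval is exactly $\sqrt[3]{4C}$. Thus the hypothesis $k\geq \sqrt[3]{4C}$ is equivalent to $k^{3}\geq 4C$, i.e. $C\leq \frac{k^{3}}{4}$. I would substitute this bound into $(k-1)C$ to obtain $(k-1)C\leq \frac{(k-1)k^{3}}{4}$, after which it remains only to verify that $\frac{(k-1)k^{3}}{4}\leq \frac{k^{2}(k+1)^{2}}{4}$. Cancelling $k^{2}>0$ reduces this to $(k-1)k\leq (k+1)^{2}$, that is $k^{2}-k\leq k^{2}+2k+1$, i.e. $0\leq 3k+1$, which holds for every $k\geq 0$. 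Taking square roots then gives $\sqrt{(k-1)C}\leq \frac{k(k+1)}{2}$ and completes the estimate.

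Finally, I would use the upper bound $m\leq \frac{\sqrt{3}-1}{4}(n-1)n$ to confirm that the interval $\left[\sqrt[3]{4C},\,n\right]$ is nonempty, i.e. that $\sqrt[3]{4C}\leq n$. Clearing denominators, the requirement $4C\leq n^{3}$ is equivalent to the quadratic inequality $8m^{2}+4n(n-1)m-n^{2}(n-1)^{2}\leq 0$ in $m$, whose positive root is precisely $\frac{\sqrt{3}-1}{4}(n-1)n$. Hence the stated upper bound on $m$ is exactly the condition that guarantees a legitimate range of admissible $k$, and any natural number $k$ in the interval satisfies $k\geq \sqrt[3]{4C}$ as a real number, so the argument applies to it.

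There is no genuine obstacle here beyond bookkeeping: the whole statement is a direct corollary of Theorem 3 once one recognizes that the endpoint $\sqrt[3]{4C}$ was chosen so that $k^{3}\geq 4C$ tames the square-root term against $\binom{k+1}{2}$. The only point requiring a little care is the legitimacy of discarding the nonnegative slack $m-n$, which is exactly where the hypothesis $m\geq n$ is used; the upper hypothesis on $m$ plays the separate role of ensuring the interval of valid $k$ is not empty.
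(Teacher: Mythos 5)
Your proposal is correct and follows essentially the same route as the paper: both invoke the bound of the preceding theorem, use $m\geq n$ to absorb the additive $n$, use $k^{3}\geq 4C$ to control the square-root term against $\binom{k+1}{2}$, and check nonemptiness of the interval via the same quadratic in $m$. The only difference is trivial bookkeeping (you compare $(k-1)C$ with $\frac{k^{2}(k+1)^{2}}{4}$ directly, while the paper first enlarges $(k-1)C$ to $kC$ and compares with $\frac{k^{4}}{4}$).
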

\begin{proof} The bound in Theorem 4 and the assumption $n\leq m$ yield
\begin{equation}\label{bitna}
S_{k}(G) \leq n+\sqrt{(k-1)\left(\frac{2m^{2}}{n-1}+mn+\frac{n^{2}}{4}\right)}<m+\sqrt{k\left(\frac{2m^{2}}{n-1}+mn+\frac{n^{2}}{4}\right)}.
\end{equation}
Now, we are going to show that for  $k \in \left[\sqrt[3]{\frac{8m^{2}}{n-1}+4mn+n^{2}}, n\right]$, it holds  $$m+\sqrt{k\left(\frac{2m^{2}}{n-1}+mn+\frac{n^{2}}{4}\right)}\leq m+{k+1 \choose 2}.$$
Since $\frac{k^{2}}{2}<{k+1 \choose 2}, $ it suffices to prove that
$$k\left(\frac{2m^{2}}{n-1}+mn+\frac{n^{2}}{4}\right)\leq\frac{k^{4}}{4},$$
which obviously holds for $k\geq \sqrt[3]{\frac{8m^{2}}{n-1}+4mn+n^{2}}.$\\
In the end, we verify that for $m\leq \frac{\sqrt{3}-1}{4}(n-1)n$ it occurs $\sqrt[3]{\frac{8m^{2}}{n-1}+4mn+n^{2}}<n.$
The inequality $\sqrt[3]{\frac{8m^{2}}{n-1}+4mn+n^{2}}<n$ is equivalent to the quadratic inequality (in $m$)
\begin{equation}\label{quadratic}
8m^{2}+4mn(n-1)-n^{2}(n-1)^{2}<0.
\end{equation}
Solving the inequality (\ref{quadratic}) we get $m \in \left(\frac{(-\sqrt{3}-1)}{4}n(n-1), \frac{(\sqrt{3}-1)}{4}n(n-1)\right).$

\end{proof}

\begin{Remark} We illustrate the above theorem for simple graphs on $100$ vertices. From the condition $n\leq m\leq \frac{\sqrt{3}-1}{4}n(n-1)$ we
have $100\leq m\leq 1811.$ Based on the restrictions for $k$, that is, $k\in  \left[\sqrt[3]{\frac{8m^{2}}{n-1}+4mn+n^{2}}, n\right],$ 
 in the table below we consider particular examples and give the corresponding values of $k$ for which the Brouwer's conjecture is true.
\begin{center}
\begin{tabular}{|c|c|}
  \hline
    m & k \\
     \hline
    100 & [38, 100] \\
     \hline
    200 & [46, 100] \\
     \hline
    300 & [52, 100] \\
     \hline
    400 & [57, 100] \\
     \hline
    500 & [62, 100] \\
     \hline
    600 & [66, 100] \\
  \hline
  700 & [70, 100] \\
  \hline
\end{tabular}
\end{center}
\end{Remark}
Note that the Theorem 5 concerns the simple graphs with $n$ vertices and with at most $\frac{(\sqrt{3}-1)}{4}n(n-1)$ edges. This is approximately $\frac{2}{5}$ of all connected graphs on $n$ vertices. The next theorem extends the set of graphs for which the Brouwer's conjecture is true.

\begin{theorem} Let $G$ be a simple graph on $n$ vertices and $m$ edges such that\\ $\sqrt{2n-2m+2\sqrt{2m^{2}+mn(n-1)}}<1+\frac{8m^{2}}{n^{2}(n-1)}+\frac{4m}{n}.$\\
The Brouwer's conjecture is true for any natural number $k\leq n$ from the interval  $$\left[\sqrt{2n-2m+2\sqrt{2m^{2}+mn(n-1)}},1+\frac{8m^{2}}{n^{2}(n-1)}+\frac{4m}{n}\right].$$
\end{theorem}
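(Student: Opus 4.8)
The plan is to combine the refined estimate obtained inside the proof of Theorem 4 with the two endpoints of the interval, each of which plays a distinct role. Write $A=\frac{2m^{2}}{n-1}+mn$ for brevity, so that the right endpoint is $R=1+\frac{4A}{n^{2}}$ and the left endpoint is $L=\sqrt{2n-2m+2\sqrt{(n-1)A}}$, using $(n-1)A=2m^{2}+mn(n-1)$. The natural starting point is inequality (\ref{izvod}) established in the proof of Theorem 4, which for the actual value $t=\mu_{1}/n\in(0,1]$ reads $S_{k}(G)\le nt+\sqrt{(k-1)(A+n^{2}t(1-t))}$. First I would sharpen this to a $t$-free bound valid in the regime $k\le R$, and then show that this sharpened bound is dominated by $m+{k+1\choose 2}$ as soon as $k\ge L$ and $k\le n$.

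For the first step (the role of $R$), I would prove that if $k\le R$ then the right-hand side $h(t)=nt+\sqrt{(k-1)(A+n^{2}t(1-t))}$ attains its maximum over $t\in[0,1]$ at $t=1$, yielding the sharp bound $S_{k}(G)\le n+\sqrt{(k-1)A}$. Since $A+n^{2}t(1-t)\ge A$, one has $\sqrt{A+n^{2}t(1-t)}-\sqrt{A}\le\frac{n^{2}t(1-t)}{2\sqrt{A}}$, so for $t\in[0,1)$ the desired inequality $h(t)\le h(1)$ reduces, after dividing by $1-t$, to $2\sqrt{A}\ge\sqrt{k-1}\,nt$; as $t\le1$ this is implied by $4A\ge(k-1)n^{2}$, that is, exactly by $k\le 1+\frac{4A}{n^{2}}=R$. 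This is the crux of the argument and is what I expect to be the main obstacle: it explains why $R$ appears, namely as the precise threshold below which the $\frac{n^{2}}{4}$ slack used to pass from (\ref{izvod}) to Theorem 4 can be discarded (for $k>R$ the maximizer of $h$ moves into the interior and this sharper bound need no longer be available). The only delicate point is confirming that $t=1$ is a genuine maximizer rather than merely a stationary point, which the elementary estimate above handles.

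For the second step (the role of $L$), it remains to verify $n+\sqrt{(k-1)A}\le m+{k+1\choose 2}$ for every natural $k\le n$ with $k\ge L$. Because $k\le n$ gives $\sqrt{(k-1)A}\le\sqrt{(n-1)A}$, and because ${k+1\choose 2}=\frac{k(k+1)}{2}\ge\frac{k^{2}}{2}$, it suffices to show $m+\frac{k^{2}}{2}\ge n+\sqrt{(n-1)A}$. Squaring $k\ge L$ gives $k^{2}\ge L^{2}=2n-2m+2\sqrt{(n-1)A}$, hence $\frac{k^{2}}{2}\ge n-m+\sqrt{(n-1)A}$, which is exactly what is needed. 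Chaining the two steps yields $S_{k}(G)\le n+\sqrt{(k-1)A}\le n+\sqrt{(n-1)A}\le m+{k+1\choose 2}$ for all such $k$, and the hypothesis $L<R$ serves only to guarantee that the interval $[L,R]$ is nonempty. No spectral input beyond Theorem 4 and de Caen's bound (already absorbed into $A$) is required; the whole argument is a two-sided optimization of (\ref{izvod}) together with a routine squaring.
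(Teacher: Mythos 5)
Your proposal is correct and takes essentially the same route as the paper: both arguments use the right endpoint $k\leq 1+\frac{8m^{2}}{n^{2}(n-1)}+\frac{4m}{n}$ to show that the bound $nt+\sqrt{(k-1)\left(\frac{2m^{2}}{n-1}+mn+n^{2}t(1-t)\right)}$ is maximized at $t=1$, and then combine the left endpoint with $k\leq n$ and ${k+1 \choose 2}>\frac{k^{2}}{2}$ to dominate $n+\sqrt{(k-1)\left(\frac{2m^{2}}{n-1}+mn\right)}$ by $m+{k+1 \choose 2}$. The only cosmetic difference is that the paper establishes the maximization at $t=1$ by checking the sign of $g^{'}(t)$ on $[0,1]$, whereas you use the concavity estimate $\sqrt{A+y}\leq \sqrt{A}+\frac{y}{2\sqrt{A}}$; both reduce to exactly the condition $k\leq 1+\frac{4A}{n^{2}}$.
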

\begin{proof} Let $g(t)=nt+\sqrt{(k-1)\left(\frac{2m^{2}}{n-1}+mn+n^{2}t(1-t)\right)}$ where $0\leq t\leq 1.$ Then $g^{'}(t)=n+\frac{n^{2}(k-1)(1-2t)}{2\sqrt{(k-1)\left(\frac{2m^{2}}{n-1}+mn+n^{2}t(1-t)\right)}}.$
If $0\leq t \leq \frac{1}{2}$, then it is clear that $g^{'}(t)>0.$ \\ Now, let $t>\frac{1}{2}.$
We obtain
\begin{equation}\label{hm}
g^{'}(t)>0 \Leftrightarrow n>\frac{n^{2}(k-1)(2t-1)}{2\sqrt{(k-1)\left(\frac{2m^{2}}{n-1}+mn+n^{2}t(1-t)\right)}}.
\end{equation}
The inequality (\ref{hm}) is equivalent to
\begin{equation}\label{nerav}
\frac{8m^{2}}{n-1}+4mn+4n^{2}t(1-t)\geq n^{2}(k-1)(2t-1)^{2}.
\end{equation}
We easily observe that the inequality in (\ref{nerav}) holds since $2t-1\leq 1$, $n^{2}t(1-t)\geq 0$ and $k\leq 1+\frac{8m^{2}}{n^{2}(n-1)}+\frac{4m}{n}.$\\
Since $g^{'}(t)>0$, we get that $g(t)$ is an increasing function on the interval $[0, 1]$. Recalling the inequality (\ref{izvod}) we have
$$S_{k}(G)\leq g(t)\leq  g(1)=n+\sqrt{(k-1)\left(\frac{2m^{2}}{n-1}+mn\right)}.$$
Let us suppose that $k\geq \sqrt{2n-2m+2\sqrt{2m^{2}+mn(n-1)}}$.  We get $2m+k^{2}\geq 2n+2\sqrt{2m^{2}+mn(n-1)}.$ Hence
$$m+{k+1 \choose 2} >m+\frac{k^{2}}{2}\geq n+\sqrt{2m^{2}+mn(n-1)}=n+\sqrt{(n-1)\left(\frac{2m^{2}}{n-1}+mn\right)}$$
$$\geq n+\sqrt{(k-1)\left(\frac{2m^{2}}{n-1}+mn\right)}=g(1)\geq S_{k}(G).$$

\end{proof}

\begin{Remark} We illustrate the above theorem for simple graphs on $100$ vertices. From the condition $\sqrt{2n-2m+2\sqrt{2m^{2}+mn(n-1)}}<1+\frac{8m^{2}}{n^{2}(n-1)}+\frac{4m}{n}$ we
get $m\geq 1468.$ Based on the restrictions for $k$, in the table below we consider particular examples and give the corresponding values of $k$ for which the Brouwer's conjecture is true.
\begin{center}
\begin{tabular}{|c|c|}
  \hline
    m & k \\
     \hline
    1500 & [78,79] \\
     \hline
    1600 & [79,85] \\
     \hline
    1700 & [80, 92] \\
     \hline
    1800 & [81, 99] \\
     \hline
    1900 & [82, 100] \\
     \hline
    2000 & [83, 100] \\
  \hline
  2100 & [84, 100] \\
  \hline
\end{tabular}
\end{center}

\end{Remark}
\textbf{Acknowledgement:} 
This work is supported in part by the Slovenian Research Agency (research program P1-0285 and research projects N1-0210, J1-3001, J1-3002, J1-3003, and J1-4414)

\end{document}